\newtheorem{precor}{{\bf Corollary}}
\newenvironment{cor}{\begin{precor}{\hspace{-0.5
               em}{\bf.\ }}}{\end{precor}}
\newtheorem{prerem}{{\bf Remark}}
\newtheorem{precon}{{\bf Conjecture}}
\newtheorem{predefin}{{\bf Definition}}
\newenvironment{defin}[1]{\begin{predefin}{\hspace{-0.5
                   em}{\bf.\ }}{\rm
#1}\hfill{$\spadesuit$}}{\end{predefin}}
\newtheorem{preexm}{{\bf Example}}
\newtheorem{preappl}{{\bf Application}}
\newtheorem{prelem}{{\bf Lemma}}
\newenvironment{lem}{\begin{prelem}{\hspace{-0.5
               em}{\bf.\ }}}{\end{prelem}}
\newtheorem{preproof}{{\bf Proof.\ }}
\newenvironment{proof}[1]{\begin{preproof}{\rm
               #1}\hfill{$\blacksquare$}}{\end{preproof}}
\newtheorem{presproof}{{\bf Sketch of Proof.\ }}
\newtheorem{prethm}{{\bf Theorem}}
\newenvironment{thm}{\begin{prethm}{\hspace{-0.5
               em}{\bf.\ }}}{\end{prethm}}
\newtheorem{prealphthm}{{\bf Theorem}}
\newenvironment{alphthm}{\begin{prealphthm}{\hspace{-0.5
               em}{\bf.\ }}}{\end{prealphthm}}
\newtheorem{prealphlem}{{\bf  Lemma}}
\newtheorem{prepro}{{\bf Proposition}}
\newtheorem{preprb}{{\bf Problem}}
\newtheorem{prequ}{{\bf Question}}
\newenvironment{qu}{\begin{prequ}{\hspace{-0.5
               em}{\bf.\ }}}{\end{prequ}}
\def\conct[#1,#2]{\mbox {${#1} \leftrightarrow {#2}$}}
\def\dconct[#1,#2]{\mbox {${#1} \rightarrow {#2}$}}
\def\deg[#1,#2]{\mbox {$d_{_{#1}}(#2)$}}
\def\mindeg[#1]{\mbox {$\delta_{_{#1}}$}}
\def\maxdeg[#1]{\mbox {$\Delta_{_{#1}}$}}
\def\outdeg[#1,#2]{\mbox {$d_{_{#1}}^{^+}(#2)$}}
\def\minoutdeg[#1]{\mbox {$\delta_{_{#1}}^{^+}$}}
\def\maxoutdeg[#1]{\mbox {$\Delta_{_{#1}}^{^+}$}}
\def\indeg[#1,#2]{\mbox {$d_{_{#1}}^{^-}(#2)$}}
\def\minindeg[#1]{\mbox {$\delta_{_{#1}}^{^-}$}}
\def\maxindeg[#1]{\mbox {$\Delta_{_{#1}}^{^-}$}}
\def\isdef{\mbox {$\ \stackrel{\rm def}{=} \ $}}
\def\dre[#1,#2,#3]{\mbox {${\cal E}_{_{#3}}(#1,#2)$}}
\def\pdre[#1,#2,#3]{\mbox {${\cal P}_{_{#3}}(#1,#2)$}}
\def\var[#1,#2]{\mbox {${\rm Var}_{_{#1}}(#2)$}}
\def\ls[#1]{\mbox {$\xi^{^{#1}}$}}
\def\hom[#1,#2]{\mbox {${\rm Hom}({#1},{#2})$}}
\def\onvhom[#1,#2]{\mbox {${\rm Hom^{v}}(#1,#2)$}}
\def\onehom[#1,#2]{\mbox {${\rm Hom^{e}}(#1,#2)$}}
\def\core[#1]{\mbox {$#1^{^{\bullet}}$}}
\def\cay[#1,#2]{\mbox {${\rm Cay}({#1},{#2})$}}
\def\cays[#1,#2]{\mbox {${\rm Cay_{s}}({#1},{#2})$}}
\def\dirc[#1]{\mbox {$\stackrel{\rightarrow}{C}_{_{#1}}$}}
\def\cycl[#1]{\mbox {${\bf Z}_{_{#1}}$}}
\begin{document}
\footnotetext[1]{$\ast$This research was in part supported by
a grant from IPM (No. 90050114).}
\begin{center}
{\Large \bf Secure Frameproof Code Through  Biclique Cover}\\
\vspace*{0.5cm}
{\bf Hossein Hajiabolhassan$^\ast$ and Farokhlagha Moazami$^\dag$}\\
{\it $^\ast$Department of Mathematical Sciences}\\
{\it Shahid Beheshti University, G.C.}\\
{\it P.O. Box {\rm 1983963113}, Tehran, Iran}\\
{\it School of Mathematics\\
Institute for Research in Fundamental Sciences {\rm (}IPM{\rm )}}\\
{\it P.O. Box {\rm 193955746}, Tehran, Iran}\\
{\tt hhaji@sbu.ac.ir}\\
{\it $^\dag$Department of Mathematics} \\
{\it Alzahra University}\\
{\it P.O. Box {\rm 1993891176}, Tehran, Iran}\\
{\tt f.moazami@alzahra.ac.ir}\\ \ \\
\end{center}
\begin{abstract} For a binary code $\Gamma$ of length $v$, a $v$-word $w$ produces by a set of codewords $\{w^1,\ldots,w^r\} \subseteq \Gamma$ if for all $i=1,\ldots,v$, we have $w_i\in \{w_i^1, \ldots, w_i^r\}$ . We call a code $r$-secure frameproof of size $t$ if $|\Gamma|=t$ and for any $v$-word that is produced by two sets $C_1$ and $C_2$ of size at most $r$ then the intersection of these sets is nonempty.
A $d$-biclique cover of size $v$ of a graph $G$ is a collection of $v$-complete bipartite subgraphs of $G$ such that each edge of $G$ belongs to at least $d$ of these complete bipartite subgraphs. In this paper, we show that for $t\geq 2r$, an $r$-secure frameproof code of size $t$ and length $v$ exists if and only if there exists a $1$-biclique cover of size $v$ for the Kneser graph ${\rm KG}(t,r)$ whose vertices are all $r$-subsets of a $t$-element set and  two $r$-subsets are adjacent if their intersection is empty. Then we investigate some connection between
the minimum size of $d$-biclique covers of Kneser graphs and cover-free families, where an $(r,w; d)$ cover-free family is a family of subsets of a finite set such that the intersection of any $r$ members of the family contains at least $d$ elements that are not in the union of any other $w$ members. Also, we present an upper bound for $1$-biclique covering number of Kneser graphs.
\begin{itemize}
\item[]{{\footnotesize {\bf Key words:}\ cover-free family, secure frameproof code, biclique cover, Hadamard matrix.}}
\item[]{ {\footnotesize {\bf Subject classification:} 05B40.}}
\end{itemize}
\end{abstract}
\section{Introduction} {\it Frameproof codes} were first introduced by Boneh and Shaw \cite{boneh}. Let $\Gamma\subseteq \{0,1\}^v$ and
$|\Gamma|=t$. $\Gamma$ is called a $(v,t)$-code and every
element of $\Gamma$ is said to be a code word. We write $w_i$ for the $i$th component of a word $w$.
Also, the incidence matrix of $ \Gamma$ is a $t\times v$
matrix whose rows are the codewords in $ \Gamma $. Suppose $C =
\{w^{(u_1)}, w^{(u_2)}, \ldots , w^{(u_d)}\}\subseteq \Gamma
\subseteq \{0,1\}^v$. For $i \in \{1, 2, \ldots, v\}$, the $i$th component is said undetectable for $C$ if
$$w^{(u_1)}_i= w_i^{(u_2)}=  \cdots = w_i^{(u_d)}.$$
Let $U(C)$ be the set of undetectable components for $C$. The
set
$$F(C) = \{x \in \{0, 1\}^v : x|_{U(C)} = w^{(u_i)}|_{U(C)} \ {\rm for \ all} \ w^{(u_i)}\in C \}$$
represents all possible $v$-tuples that could be
produced by the coalition $C$ by comparing the $d$ codewords they
jointly hold.
\begin{defin}{ An {\it r-frameproof code} is a subset $\Gamma\subseteq \{0,1\}^v$ such that for every $C\subseteq\Gamma$ where $|C|\leq r$, we have $ F(C)\cap\Gamma=C$.
}
\end{defin}
See \cite{blackburn, boneh, cff4, tardos2, tardos1} for more details about frameproof codes. The following theorem  was proved by Stinson and Wei ~\cite{frame}.
\begin{alphthm}{\rm \cite{frame}} {Suppose $\Gamma$ is an $r-FPC(v, b)$ with $b>2r - 1$. Suppose $D\subseteq\Gamma$, where
$|D| = 2r - 1$. Then there exists an unregistered word, say $maj(D) \in \{0,1\}^v$, such that
$maj(D) \in F(C)$ for any $C \subseteq D$ with $|C| = r$.
}
\end{alphthm}
In view of the aforementioned theorem, it is~not possible to identify a pirate user in an $r-FPC(v,b)$.
So they were considered a weaker condition and defined {\it secure frameproof codes} in which distributor is able to identify at least one pirate of the guilty coalitions.
\begin{defin}{Suppose that $\Gamma$ is a $(v, t)$-code. $\Gamma$ is said to be an $r$-secure frameproof
code if for any $C_1,  C_2 \subseteq \Gamma$  with $|C_1|\leq r$,
$|C_2|\leq r$, and $C_1 \cap C_2 = \varnothing$, we have $F(C_1)
\cap F(C_2) =\varnothing $.  Also, $\Gamma$ is termed  an $r-{\rm
SFPC}(v, t)$, for short. }
\end{defin}
Stinson and Wei in~\cite{frame}  studied the relationship between binary secure frameproof codes and combinatorial aspects. In this paper, we establish the relationship between this concept and biclique cover. By a {\it biclique} we mean a bipartite graph with vertex set $(X ,Y)$ such that every vertex in $X$ is
adjacent to every vertex in $Y$. Note that every empty graph is a biclique. A {\it $d$-biclique  cover} of a graph $G$ of size $s$ is a collection
of $s$ bicliques of $G$ such that each edge of $G$ is in at least $d$
of the bicliques. The $d$-{\it biclique covering number} of $G$, denoted by $bc_d(G)$, is defined to be the minimum number of $s$ such that there exists a $d$-biclique cover of size $s$  for the graph $G$.
\begin{defin}{Let $X$ be an $n$-set and ${\cal F}=\{B_1, \ldots, B_t\}$ be a family of subsets of $X$. ${\cal F}$ is called an $(r,w;d)$-cover-free family if for any two subsets $I, \ J \in [t]$ such that $|I|=r$, $|J|=w$, and $I\cap J=\varnothing$ the following condition holds
$$\displaystyle \bigcap_{i \in I} B_i \nsubseteq  \displaystyle \bigcup_{j \in J}B_j.$$
We denote it briefly by $(r,w)-CFF(n,t)$.}
\end{defin}
The minimum number of elements for which
there exists an $(r,w;d)-CFF$ with $t$ blocks is denoted by
$N((r,w;d),t)$. The incidence matrix of an $(r,w;d)-CFF$ is a $t\times n$ binary matrix $A$ such that $a_{ij}=1$ whenever $j \in B_i$ and $a_{ij}=0$ otherwise. As usual, we denote by $[t]$ the set $\{1, 2, \ldots, t\}$, and denote by ${[t] \choose r}$ the collection of all $r$-subsets of $[t]$. The graph $I_t(r,w)$ is a bipartite graph with the vertex set $({[t] \choose w},{[t]\choose r})$ which a $w$-subset is adjacent to an $r$-subset whenever their intersection is empty.
\begin{alphthm}{\rm ~\cite{haji}}\label{haji} For any positive integers $r$, $w$, $d$, and $t$, where $t\geq r+w$, we have
$$N((r,w;d),t)=bc_d(I_t(r,w)).$$
\end{alphthm}

 For abbreviation, let $bc(G)$ stand for $bc_1(G)$. The {\em Kneser graph} ${\rm KG}(t,r)$
is the graph with vertex set ${[t] \choose r}$, and $A$ is
adjacent to $B$ if $A \cap B = \varnothing$. Throughout this paper, we only consider
finite simple graphs. For a graph $G$, let $V(G)$ and $E(G)$ denote its vertex and edge
sets, respectively. A {\it homomorphism} from $G$ to $H$ is a map $\phi:
V(G)\longrightarrow V(H)$ such that adjacent vertices in $G$ are
mapped into adjacent vertices in $H$, i.e., $uv \in E(G)$ implies
$\phi(u)\phi(v) \in E(H)$. In addition, if any edge in $H$ is the
image of some edge in $G$, then $\phi$ is termed an onto-edge
homomorphism. In this paper, by $A^c$ we mean the complement of the set $A$.
In the next section, we show that for $t\geq 2r$, an $r$-secure frameproof code of size $t$ and length $v$ exists if and only if there exists a $1$-biclique cover of size $v$ for the Kneser graph ${\rm KG}(t,r)$. Also, we wish to investigate some connection between the $d$-biclique covering number of Kneser graphs and cover-free families. Finally, we present an upper bound for the biclique covering number of Kneser graphs.
\section{Secure Frameproof Codes }
For a subset $A_i$ of $[t]$, the {\it indicator vector} of $A_i$ is the vector $v_{A_i}=(v_1, \ldots, v_t)$, where $v_j=1$ if $j \in A_i$ and $v_j=0$ otherwise.
\begin{thm}\label{frameproof} Let $r$, $t$, and $v$ be positive integers, where $t\geq 2r$. An $r-{\rm SFPC}(v,t)$
exists if and only if there exists a biclique cover of size $v$ for the Kneser graph ${\rm KG}(t,r)$.
\end{thm}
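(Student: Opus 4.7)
The plan is to establish each direction by translating the separation condition of the code into a covering condition on $\mathrm{KG}(t,r)$, using each coordinate of the code to build a biclique and conversely.

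For the forward direction, suppose $\Gamma=\{w^1,\ldots,w^t\}$ is an $r$-SFPC$(v,t)$. I would associate to each coordinate $i\in[v]$ the pair $(X_i,Y_i)$ where
\[
X_i=\{A\in\tbinom{[t]}{r}:w^j_i=0\text{ for every }j\in A\},\qquad Y_i=\{A\in\tbinom{[t]}{r}:w^j_i=1\text{ for every }j\in A\}.
\]
If $A\in X_i$ and $B\in Y_i$, then $A\cap B=\varnothing$ (a shared index would be forced to have both values), so $(X_i,Y_i)$ is a biclique of $\mathrm{KG}(t,r)$. To see that the $v$ bicliques cover every edge, pick disjoint $r$-subsets $A,B$ and let $C_1=\{w^j:j\in A\}$, $C_2=\{w^j:j\in B\}$. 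Since $|\Gamma|=t$, the codewords are distinct, so $C_1\cap C_2=\varnothing$; the SFPC property forces $F(C_1)\cap F(C_2)=\varnothing$, meaning some coordinate $i$ separates $C_1$ from $C_2$, i.e.\ $A\in X_i,B\in Y_i$ (or the symmetric alternative). That coordinate's biclique then contains the edge $\{A,B\}$.

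For the reverse direction, let $(X_1,Y_1),\ldots,(X_v,Y_v)$ be a biclique cover of $\mathrm{KG}(t,r)$. I would define codewords coordinate by coordinate: set $w^j_i=0$ if $j$ lies in some $A\in X_i$, $w^j_i=1$ if $j$ lies in some $B\in Y_i$, and $w^j_i=0$ otherwise. This is well defined because the biclique property precludes $j$ from lying in both an $X_i$-vertex and a $Y_i$-vertex (their disjointness as vertices of $\mathrm{KG}(t,r)$). Now take disjoint $C_1,C_2\subseteq\Gamma$ with $|C_1|,|C_2|\le r$, let $A_1,A_2\subseteq[t]$ be their index sets, so $A_1\cap A_2=\varnothing$ and $|A_1|+|A_2|\le 2r$. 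Here is where the hypothesis $t\ge 2r$ is used: because $t-|A_1|-|A_2|\ge 2r-|A_1|-|A_2|$, I can enlarge $A_1,A_2$ to disjoint $r$-subsets $\tilde A_1,\tilde A_2$ of $[t]$. The edge $\{\tilde A_1,\tilde A_2\}$ is covered by some $(X_i,Y_i)$, so at coordinate $i$ every member of $\tilde A_1$ (hence every member of $A_1$) has value $0$ while every member of $\tilde A_2$ receives $1$ (or vice versa). Thus $F(C_1)\cap F(C_2)=\varnothing$.

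Before closing, I would verify that the constructed codewords are pairwise distinct so that $|\Gamma|=t$: applying the extension argument to the singletons $\{j\},\{k\}$ for any $j\ne k$ yields a coordinate separating $w^j$ from $w^k$. The main obstacle, and the only place any cleverness is required, is this size issue in the reverse direction — ensuring that the separation guaranteed by the biclique cover, a priori only for exact $r$-subsets, propagates to coalitions of size strictly less than $r$; that is handled cleanly by the enlargement trick, which is precisely why the hypothesis $t\ge 2r$ appears in the statement.
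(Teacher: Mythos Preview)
Your argument is correct and follows the same construction as the paper: each coordinate of the code corresponds to a biclique whose two sides are the $r$-subsets of the $1$-positions and the $r$-subsets of the $0$-positions (and conversely, each biclique yields a coordinate via the indicator of the union of one side). You are in fact more careful than the paper on two points it leaves implicit: you explicitly handle coalitions of size strictly less than $r$ by the enlargement to disjoint $r$-sets (equivalently, one can note $F(C)\subseteq F(C')$ whenever $C\subseteq C'$, so the size-$r$ case suffices), and you verify that the constructed codewords are pairwise distinct.
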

\begin{proof}{Assume that $A$ is the incidence matrix of an $r-{\rm SFPC}(v,t)$.
Assign to the $j$th column of $A$, the set $A_j$ as follows
$$A_j\isdef \{i|\  1\leq i \leq t, a_{ij}=1\}.$$
 Now, for $1\leq j\leq v$, construct the
bicliques $G_j$ with vertex set $(X_j,Y_j)$, where the vertices
of $X_j$ are all $r$-subsets of $A_j$ and the vertices of $Y_j$
are all $r$-subsets of $A_j^c$ i.e., $[t]\setminus A_j$. It is easily seen that $G_j$, for
$1\leq  j\leq  v$, is a complete bipartite graph of ${\rm
KG}(t,r)$. Let $C_1C_2$ be an arbitrary edge of ${\rm KG}(t,r)$.
So $C_1, C_2 \subseteq [t]$, and $C_1\cap C_2 = \varnothing$.
Since $A$ is the incidence matrix of an $r-{\rm SFPC}(v,t)$, we
have $F(C_1)\cap F(C_2)= \varnothing$. This means that there
exists a bit position $i$ such that the $i$th bit of all code
words of $C_1$ is $c_i$, for some $c_i \in \{ 0, 1\}$, and also
the $i$th bit of all codewords of $C_2$ is $c_i+1  \ ({\rm mod} \
2)$. So there exists a column of $A$ such that all entries
corresponding to the rows of $C_1$ are equal to $1$ and all
entries corresponding to the rows of $C_2$ are equal to $0$, or
vice versa. Hence, $C_1C_2 \in E(G_i)$. Conversely, assume that
we have a biclique cover of size $v$ for the graph ${\rm
KG}(t,r)$. Our objective is to construct an $r$-SFPC. Label
graphs in this biclique cover with $G_1, \ldots, G_v$, where
$G_i$ has as its vertex set $(X_i, Y_i)$. Let $A_i$ be the union
of sets that lie in $X_i$. Consider the indicator vectors of
$A_i$ , for $1\leq  i\leq  v$, and construct the matrix $A$ whose
columns are these vectors. Assume that $C_1$ and $C_2$ are two
disjoint subsets of $[t]$ of size $r$, i.e, $ C_1C_2 \in E({\rm
KG}(t,r))$. Let $G_i$ be the complete bipartite graph that covers
the edge $C_1C_2$. Then in the $i$th column of
the matrix $A$ all entries corresponding to the rows of $C_1$ are
equal to $1$ and all entries corresponding to the rows of $C_2$
are equal to $0$, or vice versa. Consequently, $F(C_1)\cap
F(C_2)=\varnothing$.}
\end{proof}
 A $covering$ of a graph $G$ is a subset $K$ of $V(G)$ such that every edge of $G$ has at least one end in $K$. The number of
vertices in a minimum covering of $G$ is called the {\it covering  number} of $G$ and denoted by $\beta(G)$.
In \cite{frame}, Stinson, Trung, and Wei construct an $r-{\rm SFPC}(2{2r-1 \choose r-1}, 2r+1)$.
\begin{cor} {\rm \cite{frame}} For any integer $r\geq 0$, there exists an $r-{\rm SFPC}(2{2r-1 \choose r-1}, 2r+1).$
\end{cor}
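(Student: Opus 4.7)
My plan is to apply Theorem~1, which reduces the corollary to exhibiting a biclique cover of the Kneser graph ${\rm KG}(2r+1,r)$ of size $2{2r-1 \choose r-1}$.

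As a first step I would rewrite the target size in a more suggestive form. Using ${2r-1 \choose r-1}={2r-1 \choose r}$ and Pascal's identity,
$$2{2r-1 \choose r-1}={2r-1 \choose r-1}+{2r-1 \choose r}={2r \choose r}.$$
Notice that ${2r \choose r}$ is exactly the number of $r$-subsets of $[2r+1]$ that avoid a prescribed element; this is the shape the construction will take.

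For the construction, I would fix an element $x\in[2r+1]$ (say $x=1$) and set $\mathcal{S}=\{S\in{[2r+1] \choose r}:1\notin S\}$, so $|\mathcal{S}|={2r \choose r}=2{2r-1 \choose r-1}$. For each $S\in\mathcal{S}$ I would associate the star biclique $G_S=(X_S,Y_S)$ with $X_S=\{S\}$ and $Y_S=\{T\in{[2r+1] \choose r}:T\cap S=\varnothing\}$. Since every pair in $X_S\times Y_S$ consists of disjoint $r$-subsets of $[2r+1]$, the subgraph $G_S$ is indeed a biclique (a copy of $K_{1,r+1}$) in ${\rm KG}(2r+1,r)$ in the sense of the paper.

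The verification reduces to a one-line pigeonhole remark: if $\{A,B\}\in E({\rm KG}(2r+1,r))$ then $A\cap B=\varnothing$, so the element $1$ lies in at most one of $A,B$; hence at least one endpoint, say $A$, belongs to $\mathcal{S}$, and the edge $\{A,B\}$ is covered by $G_A$. Combining with Theorem~1 produces the desired $r$-${\rm SFPC}(2{2r-1 \choose r-1},2r+1)$. I do not anticipate a substantial obstacle; the only real observation is that using a single distinguished element of $[2r+1]$ already suffices, which works precisely because the ground set has odd size $2r+1$ while any two disjoint $r$-subsets cover only $2r$ elements, leaving at least one ``slack'' coordinate in every edge.
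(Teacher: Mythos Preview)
Your proof is correct. Both your argument and the paper's ultimately rest on the same vertex cover of ${\rm KG}(2r+1,r)$ --- the family of $r$-subsets avoiding a fixed point --- but the routes differ. The paper invokes two general facts: that $bc(G)=\beta(G)$ whenever $G$ is $C_4$-free, and the formula $\beta({\rm KG}(t,r))=\frac{t-r}{r}{t-1\choose r-1}$; it then checks that ${\rm KG}(2r+1,r)$ contains no $C_4$ and computes. This yields the two-sided equality $bc({\rm KG}(2r+1,r))=2{2r-1\choose r-1}$, stronger than what the corollary requires. Your argument instead gives an explicit, self-contained construction proving only the upper bound (the existence direction the corollary actually asks for) via a one-line pigeonhole, bypassing both the $C_4$-freeness check and the Erd\H{o}s--Ko--Rado-type formula. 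The paper's approach buys the exact value of $bc({\rm KG}(2r+1,r))$; yours buys simplicity and no external input.
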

\begin{proof}{Easily, one can check that the biclique covering number of a graph $G$
without $C_4$ as a subgraph is equal to the covering number of $G$. On the other hand ${\rm KG}(2r+1,r)$ does~not contain $C_4$ as a subgraph.
So $bc({\rm KG}(2r+1,r)))= \beta({\rm KG}(2r+1,r))$. Also, it is a
well-known fact that $\beta({\rm KG}(t,r))=\frac{t-r}{r}{t-1
\choose r-1}$. An easy computation confirms the assertion.}
\end{proof}
In the next theorem, we show the relationship between the
$d$-biclique cover of Kneser graphs and cover-free families.
\begin{thm}\label{kencff} For any positive integers $r$, $d$, and $t$, where $t\geq
2r$, it holds that
$$bc_{2d}({\rm KG}(t,r)) \leq N((r,r;d),t) \leq 2bc_d({\rm KG}(t,r)).$$
\end{thm}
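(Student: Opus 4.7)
The plan is to use Theorem~\ref{haji} to rewrite $N((r,r;d),t)$ as $bc_d(I_t(r,r))$, and then compare $d$-biclique covers of the bipartite graph $I_t(r,r)$ with $d$-biclique covers of the Kneser graph ${\rm KG}(t,r)$. The two inequalities correspond to two natural ``doubling'' constructions that relate bicliques of $I_t(r,r)$ (whose two vertex classes are two disjoint copies of ${[t]\choose r}$) to bicliques of ${\rm KG}(t,r)$ (a single copy of ${[t]\choose r}$).

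For the right-hand inequality $N((r,r;d),t)\leq 2bc_d({\rm KG}(t,r))$, I would start with a $d$-biclique cover $G_1,\ldots,G_v$ of ${\rm KG}(t,r)$, where $G_i$ has vertex classes $(X_i,Y_i)$. For each $G_i$ I would produce two bicliques of $I_t(r,r)$: the biclique $H_i^{(1)}$ with left part $X_i$ and right part $Y_i$, and the biclique $H_i^{(2)}$ with left part $Y_i$ and right part $X_i$. Every edge in $I_t(r,r)$ is an ordered pair $(A,B)$ of disjoint $r$-subsets with $A$ on the left and $B$ on the right, so it corresponds to the unordered edge $AB$ of ${\rm KG}(t,r)$. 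If $G_i$ covers $AB$ via $A\in X_i,\,B\in Y_i$, then $H_i^{(1)}$ covers $(A,B)$, and if $A\in Y_i,\,B\in X_i$, then $H_i^{(2)}$ covers $(A,B)$. Either way, each of the $d$ bicliques covering $AB$ contributes one biclique covering $(A,B)$, giving a $d$-biclique cover of $I_t(r,r)$ of size $2v$.

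For the left-hand inequality $bc_{2d}({\rm KG}(t,r))\leq N((r,r;d),t)$, I would take a $d$-biclique cover $H_1,\ldots,H_v$ of $I_t(r,r)$ with $H_j$ having left part $U_j$ and right part $V_j$. The crucial observation is that since every $A\in U_j$ must be disjoint from every $B\in V_j$ in $I_t(r,r)$, and no $r$-subset is disjoint from itself when $r\geq 1$, we have $U_j\cap V_j=\varnothing$; in particular $(U_j,V_j)$ is a legitimate biclique of ${\rm KG}(t,r)$. Now fix any edge $AB$ of ${\rm KG}(t,r)$. In $I_t(r,r)$ this yields the two edges $(A,B)$ and $(B,A)$, each covered by at least $d$ of the $H_j$'s. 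Because $U_j\cap V_j=\varnothing$, no single $H_j$ can cover both $(A,B)$ and $(B,A)$, so the two collections of covering bicliques are disjoint, giving at least $2d$ bicliques of ${\rm KG}(t,r)$ that cover $AB$.

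The routine verifications are the two claims ``$G_i$ is actually a biclique of $I_t(r,r)$ (in each orientation)'' and ``$H_j$ is actually a biclique of ${\rm KG}(t,r)$.'' The only nontrivial point—and the one that drives the factor of $2$—is the disjointness $U_j\cap V_j=\varnothing$ in the second direction, which is what allows the $2d$ covering multiplicity on the Kneser side. Combining the two inequalities with Theorem~\ref{haji} then gives the claimed sandwich.
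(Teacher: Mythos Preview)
Your argument is correct and follows essentially the same idea as the paper: the factor of $2$ arises from the symmetry of the $(r,r)$ case, so that an unordered disjoint pair $\{A,B\}$ corresponds to two ordered pairs $(A,B)$ and $(B,A)$, each accounted for separately. The only cosmetic difference is that the paper works directly with the incidence matrix of the cover-free family (columns $A_j$ give bicliques on the $r$-subsets of $A_j$ versus those of $A_j^c$, and conversely the unions of the two sides of a Kneser biclique give two CFF columns), whereas you first invoke Theorem~\ref{haji} to pass to $bc_d(I_t(r,r))$ and then compare bicliques of $I_t(r,r)$ with bicliques of ${\rm KG}(t,r)$; the underlying double-counting is identical.
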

\begin{proof}{
First, assume that we have an optimal $(r,r;d)-CFF(n,t)$, i.e.,
$n=N((r,r;d),t)$ with incidence matrix $A$. Assign to the $j$th
column of $A$ the set $A_j$ as follows
$$A_j\isdef \{i|\  1\leq i \leq t,\,\,\ a_{ij}=1\}.$$
Consider  the biclique $G_j$ with vertex set $(X_j,Y_j)$, where
the vertices of $X_j$ are all $r$-subsets of $A_j$ and the
vertices of $Y_j$ are all $r$-subsets of $A_j^c$. Also, two
vertices are adjacent if the subsets corresponding to these
vertices are disjoint. It is~not difficult to see that $G_j$'s,
for $1\leq j\leq  t$, form a $2d$-biclique cover of ${\rm
KG}(t,r)$.
So $bc_{2d}({\rm KG}(t,r)) \leq N((r,r;d),t)$.\\
Conversely, assume that we have a $d$-biclique cover of ${\rm
KG}(t,r)$. Label  graphs in this biclique cover with $G_1,
\ldots, G_l$, where $G_i$ has as its vertex set $(X_i, Y_i)$. Let
$A_i$ be the union of sets that lie in $X_i$ and $B_i$ be the
union of sets that lie in $Y_i$. Obviously, $A_i$ and $B_i$ are
disjoint. Consider the indicator vectors of  $A_i$'s and $B_i$'s,
for $i = 1, \ldots, l$.  Construct the matrix $A$ whose columns
are these vectors. Then $A$ is the
incidence matrix of an $(r,r;d)-CFF(2l,t)$. So $ N((r,r;d),t)
\leq 2bc_d({\rm KG}(t,r))$. }
\end{proof}

By the aforementioned  results, it may be of interest to
find some bounds for the biclique covering number of Kneser
graphs.
\begin{thm}\label{GENKNE}
For any positive integers $d$, $r$, $s$, and $t$, where $t > 2r$
and $r>s$, we have
$$bc_d({\rm KG}(t,r))\geq bc_{m}({\rm KG}(t,s)),$$
where $m=N((r-s,r-s;d),t-2s)$.
\end{thm}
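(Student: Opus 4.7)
The plan is as follows. Fix an optimal $d$-biclique cover $G_1, \ldots, G_\ell$ of ${\rm KG}(t,r)$, so $\ell = bc_d({\rm KG}(t,r))$. As in the proof of Theorem~\ref{frameproof}, each $G_j$ may be assumed determined by a pair $(A_j, B_j)$ of disjoint subsets of $[t]$ with $X_j = {A_j \choose r}$ and $Y_j = {B_j \choose r}$, since enlarging each biclique to its maximal form only strengthens the cover. Using the same pairs, I would then define a new collection in ${\rm KG}(t,s)$ by $G'_j = \left({A_j \choose s},\,{B_j \choose s}\right)$ and argue that $G'_1, \ldots, G'_\ell$ forms an $m$-biclique cover of ${\rm KG}(t,s)$, which immediately gives $bc_m({\rm KG}(t,s)) \leq \ell = bc_d({\rm KG}(t,r))$.

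To establish the $m$-covering property, fix an arbitrary edge $C_1 C_2$ of ${\rm KG}(t,s)$ and let $T = [t] \setminus (C_1 \cup C_2)$, a ground set of size $t-2s$. Call an index $j$ \emph{relevant} if either (i) $C_1 \subseteq A_j$ and $C_2 \subseteq B_j$, or (ii) $C_2 \subseteq A_j$ and $C_1 \subseteq B_j$. The number of $G'_j$ that cover the edge $C_1 C_2$ is exactly the number of relevant indices, so it suffices to show this count is at least $m$. For each relevant $j$, put $(P_j, Q_j) := (A_j \setminus C_1,\, B_j \setminus C_2)$ in case (i) and $(P_j, Q_j) := (B_j \setminus C_1,\, A_j \setminus C_2)$ in case (ii). In either case $P_j$ and $Q_j$ are disjoint subsets of $T$ with $|P_j|, |Q_j| \geq r-s$, so $(P_j, Q_j)$ defines a biclique of $I_{t-2s}(r-s, r-s)$, where we identify the left part with $(r-s)$-subsets on the ``$C_1$-side'' and the right part with those on the ``$C_2$-side''.

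The crucial observation is that these bicliques together $d$-cover $I_{t-2s}(r-s, r-s)$. Indeed, any edge of $I_{t-2s}(r-s, r-s)$ is a disjoint pair $D_1, D_2 \in {T \choose r-s}$, and $(D_1 \cup C_1)(D_2 \cup C_2)$ is an edge of ${\rm KG}(t,r)$, hence covered by at least $d$ of the $G_j$; each such $j$ is automatically relevant and satisfies $D_1 \subseteq P_j$ and $D_2 \subseteq Q_j$ by construction. Applying Theorem~\ref{haji} then yields that the number of relevant indices is at least $bc_d(I_{t-2s}(r-s, r-s)) = N((r-s,r-s;d),\, t-2s) = m$, completing the argument.

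The main obstacle I foresee is the asymmetric bookkeeping between cases (i) and (ii): one must fix a left/right orientation of the bipartite graph $I_{t-2s}(r-s, r-s)$ and verify that swapping the roles of $A_j$ and $B_j$ in case (ii) lands the appropriate pieces on the correct sides, so that every relevant $j$ actually contributes to the cover of a given oriented edge $(D_1, D_2)$ of that graph. Once this convention is pinned down and one checks that the ``maximalization'' of the original $G_j$'s does not disturb the $d$-covering property, the reduction to Theorem~\ref{haji} is immediate.
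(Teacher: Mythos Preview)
Your proposal is correct and follows essentially the same route as the paper. The paper also takes $A_j$, $B_j$ to be the unions of the members of the two sides of $G_j$, defines $G'_j$ via the $s$-subsets of $A_j$ and $B_j$, and for a fixed edge $UV$ of ${\rm KG}(t,s)$ observes that the induced subgraph $I_{\{U,V\}}$ of ${\rm KG}(t,r)$ on $r$-sets extending $U$ (and avoiding $V$) versus $r$-sets extending $V$ (and avoiding $U$) is isomorphic to $I_{t-2s}(r-s,r-s)$, so that Theorem~\ref{haji} gives the bound $m$; your pairs $(P_j,Q_j)$ are exactly the restrictions of the $G_j$ to this subgraph, and your ``relevant'' indices are exactly those for which this restriction has an edge. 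The orientation worry you raise is harmless: since $A_j\cap B_j=\varnothing$ and $|C_1|,|C_2|\geq 1$, at most one of cases (i), (ii) can occur for any given $j$, and your definition of $(P_j,Q_j)$ already places both pieces inside $T$ on the correct sides; likewise the maximalization step only enlarges each biclique and hence preserves the $d$-covering.
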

\begin{proof}{Let $\{ G_1, G_2, \ldots, G_l \}$ be an optimal $d$-biclique cover of ${\rm
KG}(t,r)$. Also, assume that $G_i$ has as its vertex set
$(X_i,Y_i)$. Let $A_i$ and $B_i$ be the union of sets that lie in
$X_i$ and $Y_i$, respectively. For any $1\leq i \leq l$, consider
the biclique $G'_i$, as a subgraph of ${\rm KG}(t,s)$,  with
vertex set $(X'_i,Y'_i)$, where $X'_i$ is the set of all $s$-subsets
of $A_i$ and $Y'_i$ is the set of all $s$-subsets of $B_i$. One
can check that $G'_i$'s cover all edges of ${\rm KG}(t,s)$.
Moreover, any edge $UV\in E({\rm KG}(t,s))$ is contained in at
least $m$-bicliques, where $m=N((r-s,r-s;d),t-2s)$. To see this,
consider the bipartite graph $I_{\{U,V\}}$ (as an induced subgraph
of ${\rm KG}(t,r)$) with vertex set $(X_U,Y_V)$, where
$$X_U=\{W|\  U\subseteq W\subseteq [t], W\cap V=\emptyset,
|W|=r\}$$
$$Y_V=\{W|\  V\subseteq W\subseteq [t], W\cap U=\emptyset,
|W|=r\}.$$ It is a simple matter to check that $I_{\{U,V\}}$  and
$I_{t-2s}(r-s,r-s)$ are isomorphic. Also, if $G_j$ covers any
edge of $I_{\{U,V\}}$, then $UV$ is contained in $G'_j$.
Consequently, by Theorem~\ref{haji} the assertion
follows.
}
\end{proof}

In view of the proof of Theorem \ref{GENKNE}, similarly, one can extend any
biclique of $I_t(r,w)$ to a biclique of $I_t(r-i,w-j)$.
Consequently, we have the following corollary.
\begin{cor} Let $d$, $r$, $w$, and $t$ be positive integers, where $t\geq
r+w$. For any $1\leq i < r$ and $1\leq j < w$, we have
$N((r,w;d),t)\geq N((r-i,w-j;m),t)$, where
$m=N((i,j;d),t-r-w+i+j)$.
\end{cor}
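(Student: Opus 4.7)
The plan is to mimic the proof of Theorem~\ref{GENKNE} with the bipartite graphs $I_t(r,w)$ in place of the Kneser graphs. By Theorem~\ref{haji} the desired inequality is equivalent to $bc_d(I_t(r,w))\geq bc_m(I_t(r-i,w-j))$, so it suffices to turn an optimal $d$-biclique cover $\{G_1,\ldots,G_l\}$ of $I_t(r,w)$, of size $l=N((r,w;d),t)$, into an $m$-biclique cover of $I_t(r-i,w-j)$ of the same size.

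For each $G_k$ with bipartition $(X_k,Y_k)$, where $X_k$ is a family of $w$-subsets and $Y_k$ a family of $r$-subsets of $[t]$, set $A_k\isdef\bigcup X_k$ and $B_k\isdef\bigcup Y_k$; these are automatically disjoint because every element of $X_k$ is disjoint from every element of $Y_k$. Then define $G'_k$ to be the biclique of $I_t(r-i,w-j)$ whose parts $X'_k$ and $Y'_k$ consist of all $(w-j)$-subsets of $A_k$ and all $(r-i)$-subsets of $B_k$, respectively.

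The core step is to show that $\{G'_1,\ldots,G'_l\}$ is an $m$-biclique cover of $I_t(r-i,w-j)$. Given an edge $UV$ with $|U|=w-j$, $|V|=r-i$, and $U\cap V=\varnothing$, consider the induced subgraph $I_{\{U,V\}}$ of $I_t(r,w)$ on the pair $(X_U,Y_V)$, where
$$X_U=\{W\,|\,U\subseteq W,\ W\cap V=\varnothing,\ |W|=w\},\ Y_V=\{W'\,|\,V\subseteq W',\ W'\cap U=\varnothing,\ |W'|=r\}.$$
Since $[t]\setminus(U\cup V)$ has exactly $t-r-w+i+j$ elements, the maps $W\mapsto W\setminus U$ and $W'\mapsto W'\setminus V$ establish an isomorphism $I_{\{U,V\}}\cong I_{t-r-w+i+j}(i,j)$. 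The restrictions of the bicliques $G_k$ to $I_{\{U,V\}}$ cover each of its edges at least $d$ times, so by Theorem~\ref{haji} at least $N((i,j;d),t-r-w+i+j)=m$ distinct indices $k$ contribute at least one edge to $I_{\{U,V\}}$. For any such $k$, if $WW'$ is an edge of $I_{\{U,V\}}$ covered by $G_k$, then $U\subseteq W\subseteq A_k$ and $V\subseteq W'\subseteq B_k$, so $U\in X'_k$, $V\in Y'_k$, and $UV\in E(G'_k)$. Applying Theorem~\ref{haji} once more then gives $N((r-i,w-j;m),t)\leq l=N((r,w;d),t)$.

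The only genuinely new point compared to Theorem~\ref{GENKNE} is the bookkeeping caused by the asymmetric bipartition of $I_t(r,w)$: the two sides of each biclique now carry sets of different sizes ($w$ versus $r$), so the isomorphism $I_{\{U,V\}}\cong I_{t-r-w+i+j}(i,j)$ must respect this asymmetry. Beyond this, I do not anticipate any substantive obstacle.
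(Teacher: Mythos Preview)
Your proposal is correct and follows exactly the route the paper intends: the paper does not give a detailed proof of this corollary but simply remarks that ``in view of the proof of Theorem~\ref{GENKNE}, similarly, one can extend any biclique of $I_t(r,w)$ to a biclique of $I_t(r-i,w-j)$,'' and your argument is precisely the fleshed-out version of that sketch, including the identification $I_{\{U,V\}}\cong I_{t-r-w+i+j}(i,j)$ and the appeal to Theorem~\ref{haji}.
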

We know that the image of a biclique under a graph homomorphism is
a biclique. This leads us to the following lemma.
\begin{lem}\label{do} Let $G$ and $H$ be two graphs and
$\phi : G \rightarrow H$ be an onto-edge homomorphism. Also,
assume that $d$ and $t$ are positive integers and for any edge $e \in E(H)$, $bc_d(\phi^{-1}(e)) \geq
t$. Then $bc_d(G)\geq bc_t(H).$
\end{lem}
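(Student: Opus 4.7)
The plan is to transport an optimal $d$-biclique cover of $G$ to a $t$-biclique cover of $H$ of the same cardinality via $\phi$. I would start by fixing an optimal $d$-biclique cover $\{G_1,\dots,G_s\}$ of $G$, where $s=bc_d(G)$ and each $G_i$ has bipartition $(X_i,Y_i)$. For each $i$, define $H_i$ to be the bipartite graph on $(\phi(X_i),\phi(Y_i))$. Since $\phi$ is a homomorphism and every pair in $X_i\times Y_i$ is an edge of $G$, every pair in $\phi(X_i)\times\phi(Y_i)$ is an edge of $H$, so $H_i$ is complete bipartite. Moreover, $\phi(X_i)\cap\phi(Y_i)=\varnothing$, because $\phi(x)=\phi(y)$ with $x\in X_i$ and $y\in Y_i$ would force $\phi(x)\phi(y)$ to be a loop in $H$. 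Hence each $H_i$ is a bona fide biclique of $H$, which is essentially the statement that the image of a biclique under a homomorphism is a biclique, mentioned just before the lemma.

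Next I would show that $\{H_1,\dots,H_s\}$ is a $t$-biclique cover of $H$. Pick $e=uv\in E(H)$; since $\phi$ is onto-edge, the subgraph $\phi^{-1}(e)$ of $G$ (with vertex set $\phi^{-1}(\{u,v\})$ and edge set $\{xy\in E(G):\{\phi(x),\phi(y)\}=\{u,v\}\}$) contains at least one edge. For each $i$, put $X_i'=X_i\cap \phi^{-1}(\{u,v\})$ and $Y_i'=Y_i\cap \phi^{-1}(\{u,v\})$, and let $G_i'$ be the biclique on $(X_i',Y_i')$. Every edge of $\phi^{-1}(e)$ that lies in $G_i$ automatically lies in $G_i'$, so the $G_i'$'s form a $d$-biclique cover of $\phi^{-1}(e)$; discarding the empty ones still yields a valid $d$-biclique cover, hence the number of non-empty $G_i'$'s is at least $bc_d(\phi^{-1}(e))\geq t$. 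For each such index $i$ there exist $x\in X_i$ and $y\in Y_i$ with $\{\phi(x),\phi(y)\}=\{u,v\}$, so $e\in E(H_i)$. Thus $e$ is covered by at least $t$ of the $H_i$'s, which gives $bc_t(H)\leq s=bc_d(G)$.

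The only real subtlety is the bookkeeping around restrictions to $\phi^{-1}(e)$: one must observe that cutting a biclique down to the intersections of its parts with a fixed vertex set is again a biclique, and that stripping out the empty $G_i'$'s still leaves a valid $d$-cover of $\phi^{-1}(e)$, so that the inequality $bc_d(\phi^{-1}(e))\geq t$ genuinely produces $t$ distinct indices $i$ with $e\in E(H_i)$. Once this accounting is pinned down, the rest of the argument is a straightforward push-forward along~$\phi$.
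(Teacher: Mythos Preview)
Your argument is correct and follows exactly the route the paper takes: push an optimal $d$-biclique cover of $G$ forward along $\phi$ and verify that the images form a $t$-biclique cover of $H$. The paper's proof compresses everything into the single phrase ``one can check,'' whereas you have actually carried out the verification (restricting each $G_i$ to $\phi^{-1}(\{u,v\})$ and invoking $bc_d(\phi^{-1}(e))\geq t$ to count the surviving indices), so there is nothing to add.
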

\begin{proof}{Let $\{K_1,K_2, \ldots, K_l\}$ be an optimal $d$-biclique
cover of $G$. One can check that for any $0\leq i \leq
l$, $\phi(K_i)$ is a biclique and the family $\{\phi(K_1),\phi(K_2), \ldots,
\phi(K_l)\}$ is a $t$-biclique cover of $H$.}
\end{proof}

\begin{thm}\label{KNE}
For any positive integers $t$ and $r$, where $t > 2r$, we have
$$bc_d({\rm KG}(t,r))\geq bc_{3d}({\rm KG}(t-2,r-1)).$$
\end{thm}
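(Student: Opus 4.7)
The plan is to mimic the construction used in Theorem~\ref{GENKNE}: starting from an optimal $d$-biclique cover $\{G_1,\ldots,G_\ell\}$ of ${\rm KG}(t,r)$, I would project each $G_i$ to a biclique $G_i'$ of ${\rm KG}(t-2,r-1)$, and then verify that $\{G_1',\ldots,G_\ell'\}$ is a $3d$-biclique cover of ${\rm KG}(t-2,r-1)$. This gives $bc_{3d}({\rm KG}(t-2,r-1)) \leq \ell = bc_d({\rm KG}(t,r))$.

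Concretely, I would fix two distinguished elements $a,b\in[t]$ and identify $[t-2]$ with $[t]\setminus\{a,b\}$. Without loss of generality each $G_i$ is maximal, with parts ${A_i \choose r}$ and ${B_i \choose r}$ for disjoint $A_i,B_i\subseteq[t]$ of size at least $r$. Setting $A_i' := A_i\cap[t-2]$ and $B_i' := B_i\cap[t-2]$, I let $G_i'$ be the (possibly empty) biclique in ${\rm KG}(t-2,r-1)$ on parts ${A_i' \choose r-1}$ and ${B_i' \choose r-1}$; disjointness of $A_i'$ and $B_i'$ makes this a valid biclique. The case $r=1$ is vacuous since ${\rm KG}(t-2,0)$ has no edges, so assume $r\geq 2$.

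The core step is a double-counting argument for each edge $UV$ of ${\rm KG}(t-2,r-1)$. Consider the edges of ${\rm KG}(t,r)$ lying \emph{above} $UV$, namely those of the form $(U\cup\{x\})(V\cup\{y\})$ with distinct $x,y\in[t]\setminus(U\cup V)$. Setting $N := t-2r+2 \geq 3$, there are exactly $N(N-1)$ such edges (the assumption $r\geq 2$ ensures each ordered pair $(x,y)$ yields a distinct edge), each covered at least $d$ times by the original cover, contributing a total multiplicity of at least $d\,N(N-1)$. On the other hand, when $G_i'$ covers $UV$, the orientation $U\subseteq A_i, V\subseteq B_i$ (or its swap) is unique for $r\geq 2$, and a direct count shows that $G_i$ then contains exactly $a_ib_i$ above-edges, where $a_i := |A_i|-(r-1)$ and $b_i := |B_i|-(r-1)$; if $G_i'$ does not cover $UV$ then $G_i$ contains none. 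Since $A_i\cap B_i=\emptyset$ we have $a_i+b_i\leq N$ together with $a_i,b_i\geq 1$, so $a_ib_i\leq\lfloor N/2\rfloor\lceil N/2\rceil$. Dividing shows that $G_i'$ covers $UV$ for at least $d\,N(N-1)/(\lfloor N/2\rfloor\lceil N/2\rceil)$ indices $i$.

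The main (purely arithmetic) obstacle is then to check
\[
\frac{N(N-1)}{\lfloor N/2\rfloor\lceil N/2\rceil}\geq 3
\]
for every integer $N\geq 3$; this is tight at $N=3$ and $N=4$ and follows by splitting into the even and odd cases. The remaining delicate bookkeeping points are the edge-count $N(N-1)$ and the uniqueness of orientation, both of which rely on $r\geq 2$; any empty $G_i'$ is harmless since it simply contributes nothing.
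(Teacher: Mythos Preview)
Your argument is correct, but it follows a genuinely different route from the paper's. The paper proves Theorem~\ref{KNE} via Lemma~\ref{do}: it writes down an explicit onto-edge homomorphism $\phi:{\rm KG}(t,r)\to{\rm KG}(t-2,r-1)$ (deleting the maximum element of $A$, with a special rule when $\{t-1,t\}\subseteq A$) and then checks that the preimage of every edge contains an induced $C_6$ or an induced matching of size three, each of which has $d$-biclique covering number at least $3d$. You instead stay within the framework of Theorem~\ref{GENKNE}: you project the ground sets $A_i,B_i$ to $[t-2]$ and run a double count over the $N(N-1)$ ``lifts'' of a fixed edge $UV$, bounding each biclique's contribution by $\lfloor N/2\rfloor\lceil N/2\rceil$. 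The paper's approach is conceptually tidy once the homomorphism is in hand, but one has to verify that $\phi$ really is a well-defined onto-edge homomorphism and analyse the edge-preimages. Your approach avoids that machinery entirely and is more elementary; it is also pleasant that the integrality of $a_i,b_i$ is exactly what makes the boundary case $N=3$ (that is, $t=2r+1$) go through, since the real-valued bound $a_ib_i\le N^2/4$ would give only $8/3<3$ there. One small wording point: you do not actually need the $G_i$ to be \emph{maximal}, only of the shape $\binom{A_i}{r}\times\binom{B_i}{r}$, which is automatic once you set $A_i=\bigcup X_i$ and $B_i=\bigcup Y_i$.
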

\begin{proof}{
First, we present an onto-edge homomorphism $\phi$ from ${\rm
KG}(t,r)$ to ${\rm KG}(t-2,r-1)$. To see this, for every vertex
$A$ of ${\rm KG}(t,r)$, define $\phi(A):=A'$ as follows. If $A$
does~not contain both $t$ and $t-1$, then define $A' := A\setminus
\{maxA\}$. Otherwise, set $A':= \{x\}\cup A \setminus \{t, t -
1\}$, where $x$ is the maximum element absent from $A$. It
is simple to check that the subgraph induced by the inverse image
of any edge of ${\rm KG}(t-2,r-1)$ contains an induced cycle of
size six or an induced matching of size three. Hence, in view of
Lemma \ref{do}, if $\{K_1, \ldots, K_l\}$ is a $d$-biclique cover
of ${\rm KG}(t,r)$, then $\{\phi(K_1), \ldots, \phi(K_l)\}$ is a
$3d$- biclique cover of ${\rm KG}(t-2,r-1)$.}
\end{proof}
The aforementioned results motivate us to consider
the following question.
\begin{qu}
Let $d$, $r$, and $t$ be positive integers, where $t > 2r$. What
is the exact value of $bc_d({\rm KG}(t,r))${\rm ?}
\end{qu}
An $n \times n$ matrix $H$ with entries
$+1$ and $-1$  is called a {\it Hadamard matrix} of {\it order} $n$  whenever $HH^t = nI.$
It is not difficult to see that any two columns of $H$ are also orthogonal. If we permute rows or columns or if we multiply some rows or columns by $-1$ then this property
does not change. Two such Hadamard matrices are
called {\it equivalent}. For a given Hadamard matrix, we can find an
equivalent one for which the first row and the first column consist
entirely of $+1$'s. Such a Hadamard matrix is called {\it normalized}.
We will denote by $K_{m,m}^-$ the complete bipartite graph with a perfect matching removed. Obviously, $K_{m,m}^-$ is isomorphic to $I_m(1,1)$.
\begin{thm}Let $d$ be a positive integer such that there exists a Hadamard matrix of order $4d$, then
\begin{description}
\item[1.]  $bc_{2d}(K_{8d})=4d,$
\item[2.] $N((1,1;d),8d-2)=bc_{d}(K^-_{8d-2,8d-2})=4d$.
\end{description}

\end{thm}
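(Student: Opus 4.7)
The plan is to prove both equalities by matching upper and lower bounds, using the given Hadamard matrix of order $4d$ for the constructive upper bounds and a straightforward edge-counting argument for the lower bounds. For part~2, first apply Theorem~\ref{haji} together with the identification $I_{t}(1,1)\cong K^{-}_{t,t}$ to rewrite $N((1,1;d),8d-2)=bc_{d}(K^{-}_{8d-2,8d-2})$, so it suffices to compute $bc_{2d}(K_{8d})$ and $bc_{d}(K^{-}_{8d-2,8d-2})$.

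The lower bounds come from counting total edge-multiplicities. A biclique in $K_{8d}$ has two disjoint parts summing to at most $8d$, so it covers at most $(4d)^{2}$ edges; since any $2d$-biclique cover must accumulate multiplicity at least $2d\binom{8d}{2}=8d^{2}(8d-1)$, the number of bicliques is at least $(8d-1)/2$, which forces $bc_{2d}(K_{8d})\geq 4d$. Similarly, a biclique in $K^{-}_{8d-2,8d-2}$ corresponds to a pair of disjoint subsets of $[8d-2]$ and so covers at most $(4d-1)^{2}$ edges, while the required total multiplicity is $d(8d-2)(8d-3)=2d(4d-1)(8d-3)$; the resulting quotient $2d(8d-3)/(4d-1)$ strictly exceeds $4d-1$ for every $d\geq 1$, giving $bc_{d}(K^{-}_{8d-2,8d-2})\geq 4d$.

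For the upper bound in part~1, let $H$ be the given Hadamard matrix of order $4d$ and form the $4d\times 8d$ sign matrix $M=[H\mid -H]$. All $8d$ columns of $M$ are distinct, since $HH^{T}=4dI$ prevents $c_{i}=\pm c_{j}$ for any distinct indices. Each of the $4d$ rows of $M$ partitions $[8d]$ into its $+1$- and $-1$-entries and thus induces a biclique in $K_{8d}$. For two distinct columns $c,c'$ of $M$, the inner product $\langle c,c'\rangle$ is either $0$ (when they originate from two different columns of $H$, possibly with a sign flip on one of them) or $-4d$ (when they are column $i$ of $H$ and column $i$ of $-H$). In both cases the number of sign disagreements, $(4d-\langle c,c'\rangle)/2$, is at least $2d$, which is exactly the number of times the corresponding edge of $K_{8d}$ is covered.

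For the upper bound in part~2, normalize $H$ so its first row and column are all $+1$ and let $H'$ be its $0/1$ version under $+1\mapsto 1,\ -1\mapsto 0$. Each non-first row of $H'$ has exactly $2d$ ones by orthogonality to the first row, and for two distinct non-first rows the standard Hadamard count gives $|\{k:(r_{i})_{k}=1,(r_{j})_{k}=0\}|=|\{k:(r_{i})_{k}=0,(r_{j})_{k}=1\}|=d$. Take as the codewords of the claimed $(1,1;d)$-CFF$(4d,8d-2)$ the $4d-1$ non-first rows of $H'$ together with their binary complements, giving $8d-2$ rows of length $4d$, and verify the CFF property by splitting into the cases (both original; both complementary; one original and one complementary with different indices; a row and its own complement). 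The first three cases all reduce to the above balanced count of exactly $d$, while the last gives $2d\geq d$ trivially. The main obstacle is organizing this case analysis cleanly; writing everything in terms of the $\pm 1$ inner products of the rows of $H$, which equal $0$ for distinct non-first rows and $\pm 4d$ for self-inner and negation-inner products, makes each case an immediate consequence of $HH^{T}=4dI$.
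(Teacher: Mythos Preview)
Your proof is correct and follows essentially the same approach as the paper: identical edge-counting lower bounds, and equivalent Hadamard constructions for the upper bounds (your Part~1 construction is the paper's applied to $H^{T}$ rather than $H$, and your $(1,1;d)$-CFF in Part~2 is precisely the paper's biclique cover of $K^{-}_{8d-2,8d-2}$ rewritten via Theorem~\ref{haji}). The only cosmetic difference is that you spell out the four-case verification in Part~2 explicitly in CFF language, whereas the paper works directly with bicliques and leaves that check implicit.
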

\begin{proof}{Let $H=[h_{ij}]$ be a Hadamard matrix of order
$4d$. Suppose that $K_{8d}$ has $\{u_1, \ldots, u_{4d}, v_1, \ldots , v_{4d}\}$ as its vertex set. For the $j$th column of $H$, two sets $X_j$ and $Y_j$ are defined as follows
$$ X_j:=\{ u_i | h_{ij}=+1 \} \cup \{ v_i | h_{ij}=-1 \} \,\,\,  \& \,\,\ Y_j:=\{u_i| h_{ij}=-1 \}\cup \{v_i| h_{ij}=+1 \}.$$
By constructing a bipartite graph $G_j$ with vertex set $(X_j, Y_j)$ indeed we assign a biclique to
each column. It is well-known that for any two rows of a Hadamard matrix, the number of columns for which corresponding entries in these rows are different in sign, are equal to $2d$. So, for $i\neq j$ the edges $u_iu_j$, $v_iv_j$ and $u_iv_j$ of the graph $K_{8d}$ are covered by $2d$ bicliques. Finally, consider the edge $u_iv_i$, then there exist $4d$ bicliques that cover it. According to the above argument every edge is covered at least $2d$ times, so $bc_{2d}(K_{8d})\leq 4d$. On the other hand, for every graph $G$ we have $\frac{|E(G)|}{B(G)}\leq  \frac{bc_d(G)}{d}$, therefore $$ 4d-\frac{1}{2} \leq bc_{2d}(K_{8d}).$$
Since $bc_{2d}(K_{8d})$ is an integer, we have $ 4d \leq bc_{2d}(K_{8d})$ which completes the proof.
For the proof of the second part, assume that $H$ is a normalized Hadamard matrix of order $4d$. Delete the first row of $H$ and denote it by $H'=[h'_{ij}]$. Also, assume that $K^-_{8d-2,8d-2}$ has $(X,Y)$ as its vertex set where $X=\{u_1, \ldots, u_{4d-1}, v_1, \ldots, v_{4d-1} \}$,  $Y=\{u'_1, \ldots, u'_{4d-1}, v'_1, \ldots, v'_{4d-1}\}$ and $u_iu_i', v_iv_i'\not \in E(K^-_{8d-2,8d-2})$.
 Assign to the $j$th column of $H'$, two sets $X_j$ and $Y_j$ as follows
$$ X_j:=\{ u_i | h'_{ij}=+1 \} \cup \{v_i | h'_{ij}=-1 \} \quad \&  \quad Y_j:=\{u'_i | h'_{ij}=-1 \}\cup \{v'_i|h'_{ij}=+1\}.$$
By the same argument in the first part of the proof and using the well-known fact that in $H'$ every two distinct rows $i,j$ have exactly $d$ columns that the corresponding entries are $+1$ and $-1$ in the rows $i$ and $j$, respectively, one can see that every edge is covered at least $d$ times. So $bc_d(K^-_{8d-2,8d-2})\leq 4d$. On the other hand $4d -\frac{2d}{4d-1}\leq bc_d(K^-_{8d-2,8d-2})$, and $\frac{2d}{4d-1}< 1$. Therefore $4d \leq bc_d(K^-_{8d-2,8d-2})$ which establishes the second part.
}
\end{proof}
Stinson {\it et al} \cite{frame}, using the probabilistic method, obtain an upper bound for $SFPC$.
In the next theorem, we present a slight improvement of this upper bound.
\begin{thm}\label{prob2} Let $r$ and $t$ be positive integers. If $t$ is sufficiently large respect to $r$ then there exists an $r-SFPC(v,t)$ where
$$v\leq \frac{{t \choose \lceil\frac{t}{2}\rceil}}{2{t-2r \choose \lceil\frac{t}{2}\rceil-r}}
(1+\ln({\lceil\frac{t}{2}\rceil \choose r}{\lfloor\frac{t}{2}\rfloor\choose r})).$$
\end{thm}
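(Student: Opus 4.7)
The plan is to apply Theorem \ref{frameproof} and build a small biclique cover of ${\rm KG}(t,r)$ by a probabilistic alteration argument on the family $\mathcal{B}$ of ``balanced'' bicliques. Concretely, for each $A\subseteq [t]$ with $|A|=\lceil t/2\rceil$ I would take the biclique $G_A=(X_A,Y_A)$ in which $X_A$ consists of all $r$-subsets of $A$ and $Y_A$ of all $r$-subsets of $A^c$; every such $G_A$ is a biclique in ${\rm KG}(t,r)$, and $|\mathcal{B}|=\binom{t}{\lceil t/2\rceil}$.

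First, I would compute the probability that a uniformly random $G_A\in\mathcal{B}$ covers a prescribed edge $\{C_1,C_2\}$ of ${\rm KG}(t,r)$. The edge belongs to $G_A$ precisely when exactly one of $C_1,C_2$ lies in $A$; after choosing which side contains $C_1$, the remaining $\lceil t/2\rceil-r$ elements of $A$ must be drawn from the $t-2r$ elements of $[t]\setminus(C_1\cup C_2)$, giving
$$p\;=\;\frac{2\binom{t-2r}{\lceil t/2\rceil-r}}{\binom{t}{\lceil t/2\rceil}},$$
independent of the edge.

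Next I would sample $k$ members of $\mathcal{B}$ independently and uniformly, then add a trivial $K_{1,1}$-biclique for every edge that still escapes coverage. With $N=|E({\rm KG}(t,r))|=\tfrac{1}{2}\binom{t}{r}\binom{t-r}{r}$, the expected total size is at most $k+N(1-p)^k\le k+Ne^{-pk}$, so some realization attains this bound. Optimizing the tradeoff at $k\approx p^{-1}\ln(Np)$ yields a cover of size at most $p^{-1}(1+\ln(Np))+O(1)$. The cleanup step is a double-counting identity: counting triples $(C_1,C_2,A)$ with $|C_1|=|C_2|=r$, $C_1\subseteq A$, $C_2\subseteq A^c$, and $|A|=\lceil t/2\rceil$ in two different orders yields
$$Np\;=\;\binom{\lceil t/2\rceil}{r}\binom{\lfloor t/2\rfloor}{r},$$
which converts the probabilistic bound into the exact expression in the theorem.

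The main obstacle is cosmetic rather than conceptual: the alteration argument produces an additive $O(1)$ overhead coming from rounding $k$ to an integer and from the $1/p$ term contributed by the single-edge bicliques. The hypothesis that $t$ is sufficiently large with respect to $r$ is exactly what lets this $O(1)$ be absorbed into the main term $p^{-1}(1+\ln(Np))$, which tends to infinity with $t$ for fixed $r$. No other step is delicate: the incidence count and the double-counting identity are both elementary, and the probabilistic step is the textbook covering/alteration template.
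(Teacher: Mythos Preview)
Your proof is correct and follows essentially the same route as the paper's: random selection from the family of ``balanced'' bicliques $\{G_A : |A|=\lceil t/2\rceil\}$, alteration by adding a $K_{1,1}$ for each surviving uncovered edge, and optimization via the identity $Np=\binom{\lceil t/2\rceil}{r}\binom{\lfloor t/2\rfloor}{r}$. The only cosmetic difference is that the paper includes each $G_A$ independently with probability $p$ (so the optimization is over a continuous parameter and no rounding of $k$ is needed), whereas you sample a fixed number $k$ of bicliques; the resulting bounds coincide.
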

\begin{proof}{We show that if $v\geq \lfloor\frac{{t \choose \lceil\frac{t}{2}\rceil}}{2{t-2r \choose \lceil\frac{t}{2}\rceil-r}}(1+\ln({\lceil\frac{t}{2}\rceil \choose r}{\lfloor\frac{t}{2}\rfloor\choose r}))\rfloor$ then there exists a biclique cover of size $v$ for the Kneser graph ${\rm KG}(t,r)$. Let ${\cal A}$ be ${[t] \choose \lceil \frac{t}{2} \rceil}$. For every member of ${\cal A}$, say $A_i$, we can construct the biclique $G_i$ with vertex set $(X_i,Y_i)$, where the vertices of $X_i$ are all $r$-subsets of $A_i$ and the vertices of $Y_i$
are all $r$-subsets of $A_i^c$. We define ${\cal B}$ to be the collection contains all of these bicliques. Let $p\in \ [0,1]$ be arbitrary, later, we specify an optimized value for $p$. Let us
pick, randomly and independently, each biclique of ${\cal B}$ with
probability $p$ and ${\cal F}$ be the random set of all bicliques
picked and let $Y_{\cal F}$ be the set of all edges $AB$ of the graph ${\rm KG}(t,r)$ which are~not covered by the set ${\cal F}$. The expected value of $|{\cal F}|$ is clearly ${t \choose \lceil\frac{t}{2}\rceil}p$.
For every edge $AB$, $ pr(AB \in Y_{\cal F}) = (1-p)^l$ where $l=2{t-2r \choose \lceil\frac{t}{2}\rceil-r} $. So the expected value of the $|{\cal F}|+ |Y_{\cal F}| $ is at most
$${t \choose \lceil\frac{t}{2}\rceil}p + {1 \over 2}{t\choose r}{t-r \choose r}(1-p)^{2{t-2r \choose \lceil\frac{t}{2}\rceil-r}}.$$ If we set ${\cal F}' = {\cal F} \bigcup  Y_{\cal F}$, then clearly all edges of the graph ${\rm KG}(t,r)$ are covered by ${\cal F}'$. So we want to estimate $p$ such that $|{\cal F}'|$ is 
minimum. For convenient, we bound $1-p \leq e^{-p}$ to obtain $$ E(|{\cal F}|+ |Y_{\cal F}|) \leq {t \choose \lceil\frac{t}{2}\rceil}p + {1\over2}{t\choose r}{t-r \choose r}e^{-2{t-2r \choose \lceil\frac{t}{2}\rceil-r}p}. $$  The right hand side is minimized
at $p=\frac{\ln\alpha}{\beta}$, which $ \alpha = {\lceil\frac{t}{2}\rceil \choose r}{\lfloor\frac{t}{2}\rfloor\choose r}$   and $\beta = 2{t-2r \choose \lceil\frac{t}{2}\rceil-r}$ where $p \in [0,1]$ if $t$ is
sufficiently large respect to $r$. So we have an $r-SFPC(v,t)$ that
$$v\leq \frac{{t \choose \lceil\frac{t}{2}\rceil}}{2{t-2r \choose \lceil\frac{t}{2}\rceil-r}}
(1+\ln({\lceil\frac{t}{2}\rceil \choose r}{\lfloor\frac{t}{2}\rfloor\choose r})).$$}
\end{proof}



\def\cprime{$'$}

\end{document}